\DeclareMathAlphabet{\matheurm}{U}{eur}{m}{n}
  \newcommand{\IR}{\mathbb{R}}
 \newcommand{\gr}{\mathfrak}
 \newcommand{\g}{{\mathfrak g}}
\newcounter{commentcounter}
\theoremstyle{plain}
\newtheorem{theorem}{Theorem}
\newtheorem{corollary}{Corollary}
\newtheorem*{theorem*}{Theorem}
\newtheorem*{mtheorem*}{Main Theorem}
\theoremstyle{definition}
\newtheorem{remark}{Remark}
\theoremstyle{remark}
\newtheorem*{summary*}{Summary}
\let\c@equation=\c@theorem\makeatother
\newcommand{\version}[1] 
{\begin{center} Last edited on #1\\
    Last compiled on \today\\
file name: \jobname
  \end{center}
}
 \title{On the equivarant de Rham cohomology for non-compact Lie groups}
        \author{Camilo Arias Abad}
        \address{Escuela de Matem\'aticas, Universidad Nacional de Colombia, Calle 59A No 63 - 20, Medell\'in, Colombia }   
      \email{camiloariasabad@gmail.com}
      \author{Bernardo Uribe}
      \address{Departamento de Matem\'aticas y Estad\'istica, Universidad del Norte, Km. 5 v\'ia Puerto Colombia, Barranquilla, Colombia}
 \email{buribe@gmail.com}
      \urladdr{https://sites.google.com/site/bernardouribejongbloed/}
            \keywords{Equivariant Cartan complex, non-compact Lie group, equivariant cohomology. }       
       \subjclass[2010]{}
\thanks{Both authors acknowledge the financial support of the Alexander Von Humboldt Foundation and the Max Planck Institut for Mathematics.
The first author acknowledges the support of the SNSF and the hospitality of the University of Toronto. The second author acknowledges the financial support of COLCIENCIAS through grant number 121565840569 of the Fondo Nacional de Financiamiento para la Ciencia, la Tecnolog\'ia y la Innovaci\'on, Fondo Francisco Jos\'e de Caldas.}
\begin{document}

\begin{abstract}
Let $G$ be a connected and non-necessarily compact Lie group acting on a connected manifold $M$. In this short note we announce the following result: for a $G$-invariant closed differential form on $M$, the existence of a closed equivariant
extension in the Cartan model for equivariant cohomology is equivalent to the existence of an extension in the homotopy quotient.
  \end{abstract}

\maketitle

\newlength{\origlabelwidth} \setlength\origlabelwidth\labelwidth

\section{Introduction}

The Cartan model for the equivariant cohomology of the manifold $M$
$$\Omega_G^*M:= (S(\g^*) \otimes \Omega^*M)^G, \ \ d_G= d + \Omega^a\iota_{X_a}$$ can be seen as the 
de Rham version for the equivariant cohomology. Whenever the Lie group $G$ is compact, Cartan proved an equivariant
version of the De-Rahm Theorem, stating that the cohomology of the Cartan complex is canonically isomorphic
to the cohomology with real coefficients of the homotopy quotient  $H*(\Omega_G^*M) \cong H^*(M \times_G EG; \IR)$ \cite{Cartan} cf. \cite[Thm. 2.5.1]{Guillemin-Sternberg}. When the Lie group $G$ is not compact, the cohomology of the complex $\Omega_G^*M$ (which we also call the Cartan complex) fails in many situations to be isomorphic
to the cohomology of the homotopy quotient, and the explicit relation between the two has been very scarcely addressed.

Nevertheless, the Cartan complex is very well suited for studying equivariant conditions at the infinitesimal level.
Of particular interest is the study of the 
conditions under which there is absence of anomalies in gauged WZW actions on Lie groups. In \cite{Witten} Witten showed that the absence of anomalies in gauged WZW actions on compact
Lie groups was equivalent to the existence of closed equivariant extension of the WZW term on the Cartan complex, further showing that the existence or absence of anomalies is
purely topological. The arguments of Witten could be extended
without trouble to the non-compact case (see \cite[Chapter 4]{Uribe}), and together with the main result of this paper, we conclude that the absence or existence of anomalies is purely topological fact, independent of the compacity of the Lie group.

In this short note we investigate the relation between the cohomology
of the $G$-equivariant Cartan complex of $M$ and the cohomology of the homotopy quotient $M \times_G EG$, and we show
that indeed there is a surjective map from the former to the latter. In particular this result implies that 
 for a $G$-invariant closed differential form on $M$, the existence of a closed equivariant
extension in the Cartan model for equivariant cohomology is equivalent to the existence of an extension in the homotopy quotient.

 \section{Equivariant Cartan complex for connected Lie groups}
 
 Let $G$ be a connected Lie group with lie algebra $\g$. Let $K \subset G$ be a maximal compact subgroup of $G$ and denote
  by $\gr k$  its Lie algebra. The inclusion
 of Lie algebras ${\gr k} \hookrightarrow \g$ induces a dual map $\g^* \to {\gr k}^*$ which is ${\gr k}$-equivariant. Therefore
 we have the $K$-equivariant map
 $$S(\g^*) \to S(\gr k^*)$$
 from the symmetric algebra on $\g^*$ to the symmetric algebra on $\gr k^*$.
 
 Consider a manifold $M$ endowed with an action of  $G$. The Cartan complex associated to the $G$-manifold $M$ is
 $$\Omega_G^*M:= (S(\g^*) \otimes \Omega^*M)^G, \ \ d_G= d + \Omega^a\iota_{X_a}$$
 where $a$ runs over a base of $\g$, $\Omega^a$ denotes the element in $\g^*$ dual to $a$ and $X_a$ is the vector field 
 on $M$ that defines the element $a \in \g$.
\begin{remark}
In the literature, whenever the Cartan complex is used, it is assumed that the Lie group is compact. In this note 
we extend the notation of Cartan to the non-compact case.
\end{remark}
The composition of the natural maps
$$ (S(\g^*) \otimes \Omega^*M)^G \hookrightarrow (S(\g^*) \otimes \Omega^*M)^K \to (S(\gr k^*) \otimes \Omega^*M)^K$$
induces a homomorphism of Cartan complexes
$$\Omega_G^* M \to \Omega_K^*M.$$

 \begin{theorem}
 Let $G$ be a connected Lie group with Lie algebra $\g$, let $\gr k$ be the Lie algebra
 of the maximal compact subgroup $K$ of $G$ and consider a $G$-manifold $M$. Then the map
  $$\Omega_G^* M \to \Omega_K^*M$$
 induces a surjective map in cohomology
 $$H^*(\Omega_G^* M, d_G) \twoheadrightarrow H^*(\Omega_K^* M, d_K).$$
Since there are canonical isomorphisms $ H^*(\Omega_K^* M, d_K)\cong H^*(M \times_K EK,\IR) \cong H^*(M \times_G EG,\IR)$, we conclude that the canonical map
 $$H^*(\Omega_G^* M, d_G) \twoheadrightarrow H^*(M \times_G EG,\IR)$$
 is surjective.
\end{theorem}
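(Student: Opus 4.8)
The plan is to isolate the only nontrivial assertion, namely that the chain map $\Omega_G^* M\to\Omega_K^* M$ is surjective in cohomology; the two displayed isomorphisms are then standard. Indeed $H^*(\Omega_K^* M,d_K)\cong H^*(M\times_K EK,\IR)$ is Cartan's theorem for the compact group $K$, while $H^*(M\times_K EK,\IR)\cong H^*(M\times_G EG,\IR)$ follows by taking $EG$ as a model for $EK$: the induced map $M\times_K EG\to M\times_G EG$ is a fibre bundle with fibre $G/K$, which is diffeomorphic to a Euclidean space and hence contractible.

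To study the map I would first average an inner product over the compact group $K$ to obtain a $K$-invariant complement $\gr p$ of $\gr k$ in $\g$. Dually this yields an isomorphism of $K$-algebras $S(\g^*)\cong S(\gr k^*)\otimes S(\gr p^*)$ together with a $K$-equivariant section of the restriction $S(\g^*)\to S(\gr k^*)$. Since $K$ is compact and connected the functor of $K$-invariants is exact, and since $G$ is connected a $K$-invariant element is $G$-invariant precisely when it is annihilated by the Lie derivatives $L_X$ for all $X\in\gr p$; thus $\Omega_G^* M$ sits inside $(S(\g^*)\otimes\Omega^* M)^K$ as the subspace cut out by these $\gr p$-invariance equations.

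The core of the proof is then a lifting procedure organised by the polynomial degree in the $\gr p^*$-directions. Writing $d_G=D_0+D_1$, where $D_0=d+\Omega^a\iota_{X_a}$ (summed over a basis of $\gr k$) preserves this degree and $D_1=\Omega^\alpha\iota_{X_\alpha}$ (summed over a basis of $\gr p$) raises it by one, I observe that $D_0$ induces $d_K$ on the degree-zero part, which is exactly $\Omega_K^* M$. Given a $d_K$-closed class, the section of the previous step produces a degree-zero, $K$-invariant lift $\xi_0$ with $D_0\xi_0=0$, and I would add correction terms $\xi_1,\xi_2,\dots$ of strictly increasing $\gr p$-degree so that $\Xi=\sum_j\xi_j$ becomes simultaneously $d_G$-closed and $G$-invariant. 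Equivalently, one runs the spectral sequence of the resulting filtration and asks that every class in the bottom row be a permanent cocycle; the map of the theorem is the corresponding edge homomorphism, so its surjectivity is exactly this survival statement.

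The step I expect to be the main obstacle is the vanishing of the successive obstructions to these corrections. At each stage the obstruction to solving simultaneously the closedness and the $\gr p$-invariance equations is a cohomology class of a Koszul-type complex built from the $K$-representation $\gr p^*$, and the mechanism that must annihilate it is the contractibility of the fibre $G/K$, encoded algebraically by a $K$-equivariant contracting homotopy in the $\gr p$-directions arising from the splitting $G\cong K\times\IR^{\dim\gr p}$. Producing such a homotopy and controlling its interaction with $D_1$ and with the invariance constraint is the delicate heart of the argument: it is precisely here that the non-compactness of $G$ enters, and a careful verification of this vanishing is what would force the edge homomorphism, and hence the canonical map $H^*(\Omega_G^* M,d_G)\to H^*(M\times_G EG,\IR)$, to be surjective.
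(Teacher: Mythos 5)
There is a genuine gap, and you have located it yourself: the entire content of the theorem is the vanishing of the successive obstructions in your lifting procedure, and your proposal stops exactly there, asserting that the contractibility of $G/K$ ``must'' supply a $K$-equivariant contracting homotopy in the $\gr p$-directions without producing one. This is not a routine verification. The Cartan complex sees only the infinitesimal action together with the condition of $G$-invariance; the diffeomorphism $G\cong K\times\IR^{\dim\gr p}$ is not a group homomorphism, so it does not act on $S(\g^*)\otimes\Omega^*M$, and it is not clear that it yields any operator compatible with $D_1$ and with the $\gr p$-invariance equations. The natural home for your obstruction classes is the higher differentiable cohomology $H^q_d(G,S(\g^*)\otimes\Omega^*M)$ for $q>0$, which is generally nonzero for non-compact $G$, so the obstructions cannot be killed for purely formal, Koszul-type reasons; some genuinely global input is required. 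There is also a structural flaw in the ``edge homomorphism'' framing: $\Omega_K^*M$ is \emph{not} the $\gr p$-degree-zero graded piece of the filtered complex $\Omega_G^*M$, because the restriction map $(S(\g^*)\otimes\Omega^*M)^G\to(S(\gr k^*)\otimes\Omega^*M)^K$ need not be surjective; hence the map of the theorem is not the edge homomorphism of a spectral sequence internal to $\Omega_G^*M$, and the ``every bottom-row class is a permanent cocycle'' reformulation does not apply as stated.

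The paper supplies the missing global input by enlarging both Cartan complexes: it realizes $\Omega_G^*M$ and $\Omega_K^*M$ as the bottom rows $E_1^{*,0}$ and $\overline{E}_1^{*,0}$ of the spectral sequences (filtration by polynomial-plus-form degree) of Getzler's smooth group-cochain complexes $C^*(G,S(\g^*)\otimes\Omega^\bullet M)$ and $C^*(K,S(\gr k^*)\otimes\Omega^\bullet M)$, whose total cohomologies are canonically $H^*(M\times_G EG;\IR)$ and $H^*(M\times_K EK;\IR)$. The comparison map between them is an isomorphism on total cohomology (homotopy invariance of the homotopy quotient), the $K$-side spectral sequence collapses onto its bottom row because $K$ is compact, and surjectivity of $E_2^{*,0}\to\overline{E}_2^{*,0}$, i.e.\ of $H^*(\Omega_G^*M)\to H^*(\Omega_K^*M)$, follows by comparing $E_\infty$-pages. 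The theorem of Getzler identifying the smooth-cochain model with the cohomology of the homotopy quotient is precisely what your purely Lie-algebraic argument lacks, and I do not see how to complete your approach without importing it or an equivalent (Bott--Shulman, van Est) statement.
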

 
 \begin{proof}

Consider the complex $C^k(G,  S({\gr g}^*) \otimes \Omega^\bullet M
)$ defined in \cite[Section 2.1]{Getzler} whose elements are smooth maps
$$f(g_1, \dots , g_k | X) : G^k \times \gr{g} \to \Omega^\bullet M, $$
which vanish if any of the arguments $g_i$ equals the identity of
$G$. The differentials $d$ and $\iota$ are defined by the formulas
\begin{eqnarray*}
 (df)(g_1, \dots , g_k | X) &=& (-1)^k df(g_1, \dots , g_k | X) \ \ \ \ \ \ {\rm{and}}\\
(\iota f) (g_1, \dots , g_k | X) &=& (-1)^k \iota(X) f(g_1, \dots
, g_k | X),
\end{eqnarray*} as in the case of the differentials in Cartan's
model for equivariant cohomology \cite{Cartan, Guillemin-Sternberg}. 

The differential $\bar{d}: C^k \to C^{k+1}$ is defined by the formula
\begin{eqnarray*}
(\bar{d}f)(g_0, \dots , g_k|X) & = & f( g_1, \dots , g_k | X ) +
 \sum_{i=1}^k (-1)^i f(g_0, \dots, g_{i-1}g_i, \dots  , g_k | X)\\
 & & +(-1)^{k+1} g_k f(g_0, \dots , g_{k-1} | {\rm{Ad}}(g_k^{-1})X),
\end{eqnarray*}
and the fourth differential $\bar{\iota} : C^k \to C^{k-1}$ is defined by
the formula
\begin{eqnarray*}
(\bar{\iota}f)(g_1, \dots , g_{k-1}|X) & = & \sum_{i=0}^{k-1} (-1)^i
\frac{\partial}{\partial t}  f(g_1, \dots, g_i, e^{tX_i}, g_{i+1}
\dots  , g_{k-1} | X),
\end{eqnarray*}
where $X_i= {\rm{Ad}}(g_{i+1} \dots g_{k-1})X$.

If the map
$$ f: G^k \to  S( \gr g^*) \otimes \Omega^\bullet M $$
has for image a homogeneous polynomial of degree $l$, then the total degree
of the map $f$ is $deg(f)=k+l$. The structural
maps $d, \iota, \bar{d}$ and $\bar{\iota}$ are all of degree 1, and
the operator $$d_G = d + \iota +\bar{d} + \bar{\iota}$$ becomes a
degree 1 map that squares to zero.

The cohomology of the complex
$$\left( C^*(G,
S({\gr g}^*) \otimes \Omega^\bullet M ) , d_G \right)$$ will be denoted by
 $$H^*(G, S({\gr g}^*) \otimes \Omega^\bullet M )$$
and in \cite[Thm. 2.2.3]{Getzler} it was shown that there is a canonical isomorphism of rings
$$H^*(G, S({\gr g}^*) \otimes \Omega^\bullet M ) \cong H^*(M \times_G
EG ; \IR)$$

Note that there are natural maps of complexes
 $$C^*(G, S(\g^*) \otimes \Omega^*M)  \to C^*(K,S(\gr k^*)\otimes \Omega^*M)$$
 inducing an isomorphism on cohomology groups
 $$H^*(G, S(\g^*) \otimes \Omega^*M)  \stackrel{\cong}{\to} H^*(K,S(\gr k^*) \otimes \Omega^*M).$$
 This isomorphism follows from the fact that the inclusion $K  \subset G$ is a homotopy equivalence inducing
 a homotopy equivalence $$M\times_KEK \simeq  M \times_G EG$$ and the fact that $$H^*(M\times_G EG,\IR) \cong H^*(G, S(\g^*)\otimes \Omega^*M)$$ for any
 connected Lie group $G$.
 
 Filtering the double complex $C^*(G, S(\g^*)\otimes \Omega^*M)$ by the degree of the elements in $S(\g^*)\otimes \Omega^*M$ we obtain a spectral
 sequence whose first page is 
 $$E_1=H^*_d(G,S(\g^*)\otimes \Omega^*M),$$ the differentiable cohomology of $G$ with values in the graded
 representation $S(\g^*)\otimes \Omega^*M$. Note that in the 0-th row we obtain
 $$E_1^{*,0}= (S(\g^*)\otimes \Omega^*M)^G=\Omega_G^*M.$$
  
 The same degree filtration applied to the complex $C^*(K,S(\gr k^*)\otimes \Omega^*M)$ produces a spectral sequence
 which at the first page is $\overline{E}_1=H^*_d(K,S(\gr k^*)\otimes \Omega^*M)$, and since $K$ is compact this simply becomes
 $$\overline{E}_1^{*,0}=(S(\gr k^*)\otimes \Omega^*M)^K=\Omega_K^*M$$
  with $\overline{E}_1^{p,q}=0$ for $q\neq 0$. 
 
 The first differential of the spectral sequence once restricted to the 0-th row $E_1^{*,0}=\Omega_G^*M$ is precisely
 the differential of the Cartan complex; therefore we obtain
 $$E_2^{*,0}= H^*(\Omega_G^*M).$$
 Equivalently we obtain
 $$\overline{E}_2^{*,0}= H^*(\Omega_K^*M)\cong H^*(M\times_K EK, \IR),$$
 but in this case the spectral sequence collapses at the second page and the only non zero elements
 in $\overline{E}_\infty$ appear on the 0-th row $\overline{E}_\infty^{*,0}\cong H^*(M\times_K EK, \IR)$.

 The canonical map between the complexes
$$C^*(G, S(\g^*) \otimes \Omega^*M)  \to C^*(K,S(\gr k^*)\otimes \Omega^*M)$$ induces a map of spectral sequences $E_\bullet \to \overline{E}_\bullet$, and we know
 that at the pages at infinity it should induce an isomorphism $E_\infty^{*,\star} \stackrel{\cong}{\to} \overline{E}_\infty^{*,\star} $. Therefore
 the map
 $$E_2^{*,0} \to \overline{E}_2^{*,0}$$
 must be a surjective map, and hence we have the canonical map
 $$\Omega_G^*M = E_1^{*,0} \to  \overline{E}_1^{*,0}= \Omega_K^*M$$
 inducing the desired
 surjective map in cohomology
 $$H^*(\Omega_G^* M, d_G) \twoheadrightarrow H^*(\Omega_K^* M, d_K).$$
 \end{proof}

 Finally, from the previous theorem  we may conclude:
 
 \begin{corollary}
 For a $G$-invariant closed differential form on $M$, the existence of a closed equivariant
extension in the Cartan model for equivariant cohomology is equivalent to the existence of an extension in the homotopy quotient.
 \end{corollary}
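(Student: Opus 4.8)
The plan is to rephrase both existence statements as the condition that the class of the given form $\omega$ in $H^*(M;\IR)$ lie in the image of a restriction map, and then to read off their equivalence from the surjectivity established in the Theorem. Throughout I work at the level of cohomology classes, so that ``extension of $\omega$'' means that $[\omega]\in H^*(M;\IR)$ is hit by the relevant restriction map.

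First I would set up the two restriction maps. On the Cartan side, evaluating the polynomial variables of $S(\g^*)$ at zero defines $\ev_0\colon \Omega_G^* M\to(\Omega^*M)^G$; since $\Omega^a\iota_{X_a}$ strictly raises polynomial degree, $\ev_0$ is a morphism of complexes $(\Omega_G^* M,d_G)\to((\Omega^*M)^G,d)$, and composing with the inclusion of invariant forms yields $\rho\colon H^*(\Omega_G^* M,d_G)\to H^*(M;\IR)$. The existence of a closed equivariant extension of $\omega$ then corresponds to the membership $[\omega]\in\im(\rho)$: a closed $\tilde\omega\in\Omega_G^* M$ with $\ev_0\tilde\omega=\omega$ satisfies $\rho[\tilde\omega]=[\omega]$, and conversely a class restricting to $[\omega]$ supplies the extension. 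On the topological side, the inclusion of a fibre $M\hookrightarrow M\times_G EG$ induces $r\colon H^*(M\times_G EG;\IR)\to H^*(M;\IR)$, and an extension of $\omega$ over the homotopy quotient is exactly the condition $[\omega]\in\im(r)$.

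The core of the proof is the naturality identity $\rho=r\circ\Theta$, where $\Theta\colon H^*(\Omega_G^* M)\twoheadrightarrow H^*(M\times_G EG;\IR)$ is the surjection of the Theorem. Granting it, surjectivity of $\Theta$ forces $\im(\rho)=\im(r\circ\Theta)=\im(r)$, so that $[\omega]\in\im(\rho)$ if and only if $[\omega]\in\im(r)$; together with the two reformulations above this is precisely the asserted equivalence. I expect $\rho=r\circ\Theta$ to be the main obstacle, since $\Theta$ is built from Getzler's isomorphism $H^*(G,S(\g^*)\otimes\Omega^*M)\cong H^*(M\times_G EG;\IR)$ of \cite{Getzler} together with the edge homomorphism $E_2^{*,0}\to E_\infty^{*,0}$ of the spectral sequence in the proof of the Theorem, whose zeroth row is $\Omega_G^* M=(S(\g^*)\otimes\Omega^*M)^G$. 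To establish the identity I would trace the fibre inclusion through these identifications and verify that on the complex $C^*(G,S(\g^*)\otimes\Omega^*M)$ the restriction to the fibre is the projection onto $C^0(G,S^0(\g^*)\otimes\Omega^*M)=(\Omega^*M)^G$, that is, the map $\ev_0$ followed by inclusion. This compatibility can be reduced to the compact subgroup $K$ through the comparison $H^*(\Omega_K^* M)\cong H^*(M\times_G EG;\IR)$ already used in the Theorem, where the corresponding statement for the Cartan map into the Borel model is classical; the connectedness of $G$ is inherited from the Theorem and enters only through that comparison.
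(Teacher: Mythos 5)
Your proposal is correct and follows essentially the same route as the paper: both reformulate the two extension statements as membership of $[\omega]$ in the images of restriction maps to $H^*(M;\IR)$, and both conclude by factoring the Cartan-side restriction through the surjection of the Theorem (ultimately reducing the compatibility check to the compact subgroup $K$, where the Cartan--Borel comparison is classical). Your write-up is somewhat more explicit than the paper's about the naturality identity $\rho = r\circ\Theta$ that makes the images coincide, but this is a difference of detail, not of method.
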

 \begin{proof}
 A $G$-invariant closed differential form on $M$ may be extended to a closed form in the Cartan complex 
 if and only if its cohomology class lies in the image of the projection map
 $$H^*(\Omega_G^* M, d_G) \to H^*(M).$$
 
 This projection map can be seen as the composition of the maps
  $$H^*(\Omega_G^* M, d_G) \twoheadrightarrow H^*(\Omega_K^* M, d_K) \to H^*(M).$$
 Since the left hand side map is surjective, a  $G$-invariant closed differential form on $M$ 
  may be extended to a closed form in the Cartan complex
 if and only if its cohomology class lies in the image of the right hand side map.
The canonical isomorphisms 
$$H^*(M\times_GEG; \IR) \cong H^*(M\times_KEK; \IR) \cong H^*(\Omega_K^* M, d_K)$$
imply the result.
 \end{proof}


\bibliography{Cartan-non-compact-Lie}
\bibliographystyle{plain}
\end{document}